\newtheorem{proposition}{Proposition}
\newtheorem{theorem}{Theorem}
\theoremstyle{definition}
\newtheorem{definition}{Definition}
\newtheorem{example}{Example}
\newtheorem{lemma}{Lemma}
\theoremstyle{remark}
\newtheorem{remark}{Remark}
\DeclareMathOperator{\ord}{ord}
\DeclareMathOperator{\WP}{WP}
\DeclareMathOperator{\supp}{supp}
\newcommand{\eqdef}{\stackrel{\mathrm{def}}{=}}
\pgfplotsset{width=6cm,compat=newest}
\DeclareMathSymbol{\mlqq}{\mathord}{operators}{``}
\DeclareMathSymbol{\mrqq}{\mathord}{operators}{`'}
\title{Tropical Weil's reciprocity law and Weil's pairing}
\author{Nikita Kalinin, Matthew Magin} 
\date{\today}
\begin{document}

\maketitle

\begin{abstract}
       The Weil reciprocity law asserts that given two meromorphic functions $f, g$ on a compact complex curve, the product of the values of $f$ over the roots and poles of $g$ is equal to the product of the values of $g$ over the roots and poles of $f$. We state and prove a tropical version of this reciprocity; the tropical ideas lead to yet another transparent ``combinatorial'' proof of the classical Weil reciprocity law. Then, we construct a tropical Weil pairing on the set of divisors of degree zero. 
\end{abstract}
     
     \tableofcontents

 \section{Weil's reciprocity law}

   The first known formulation of Weil's reciprocity law is contained in Andr\'e Weil's letter to Emil Artin on 10 July 1942, \cite[p. 291]{weil2009oeuvres}. In this letter, Weil explains his definition of a pairing \cite{weil40} on the classes of divisors of finite order. This reciprocity law was used as an ingredient in that construction; Weil did not even have a separate statement for it in \cite{weil40}.

  Let us formulate this reciprocity law over $\mathbb C$. Let $f$ be a meromorphic function on a Riemann surface $S$. Then, in a small neighborhood of a point $p$, choose a local parameter $z$, then the function $f$ can be expanded as 
  \[
    f(z) = a_0 z^{k} + a_1 z^{k + 1} + \ldots, a_0\ne 0.
   \]
    The smallest degree $k$ in this expansion does not depend on the choice of a local parameter. It is called \emph{the order of the function $f$ at the point $p$} and denoted by $\ord_{p}{f}=k$.

  \begin{theorem}[\cite{{weil2009oeuvres}}]\label{complex_weil_reciprocity} 
    Let $f, g$ be non-zero meromorphic functions on the compact Riemann surface $S$ with no common zeros or poles. Then 
    \[
      \prod_{p \in S} f(p)^{\ord_{p}{g}} = \prod_{p \in S} g(p)^{\ord_{p}{f}}
    \]

  \end{theorem}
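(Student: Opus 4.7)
The plan is to recast the identity in terms of the tame symbol and then reduce the problem to the case of $\mathbb{P}^1$ via the norm map associated with $g$ viewed as a morphism $S\to\mathbb{P}^1$.

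First I would introduce the tame symbol
\[
  \{f,g\}_p := (-1)^{(\ord_p f)(\ord_p g)}\frac{f^{\ord_p g}}{g^{\ord_p f}}(p),
\]
which is a well-defined nonzero complex number at every $p\in S$ and equals $1$ outside the finite union of zeros and poles of $f$ and $g$. The hypothesis that $f$ and $g$ share no zeros or poles means that at each point at least one of $\ord_p f$, $\ord_p g$ vanishes, so the sign factor drops out and the identity of the theorem is equivalent to $\prod_{p\in S}\{f,g\}_p=1$. The tame symbol is bimultiplicative and skew-symmetric in $(f,g)$, which will be crucial below.

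Second, I would verify the $S=\mathbb{P}^1$ case by direct computation: writing $f=c\prod_i(z-a_i)^{m_i}$ and $g=c'\prod_j(z-b_j)^{n_j}$, both sides reduce, after tracking the contributions at $\infty$ using $\sum_i m_i=0$ and $\sum_j n_j=0$, to the manifestly symmetric expression $\prod_{i,j}(a_i-b_j)^{m_i n_j}$ (multiplied by scalar contributions from $c,c'$ that cancel).

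Third, for a general $S$ I would use $g$ itself as a finite covering $\pi := g\colon S\to\mathbb{P}^1$ of degree $d=\deg g$, and let $N_\pi(f)$ denote the norm of $f$, i.e.\ the rational function on $\mathbb{P}^1$ whose value at a generic point $q$ is $\prod_{p\in\pi^{-1}(q)}f(p)$. With $t$ the coordinate on $\mathbb{P}^1$, the key lemma is the \emph{projection formula for the tame symbol},
\[
  \prod_{p\in S}\{f,g\}_p \;=\; \prod_{q\in\mathbb{P}^1}\{N_\pi(f),t\}_q,
\]
after which the right-hand side equals $1$ by the $\mathbb{P}^1$-case applied to $(N_\pi f,t)$.

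The main obstacle is this projection formula. At an unramified $q$ the claim follows from straightforward multiplicativity, but at points where $\pi$ ramifies (which are exactly the zeros and poles of $g$) one must compare $\prod_{p\in\pi^{-1}(q)}\{f,g\}_p$ with $\{N_\pi f,t\}_q$ using a local parameter on $S$ at $p$ expressed through $t$ and the ramification index $e_p=\ord_p g / \ord_q t$. An alternative, possibly shorter route applies the residue theorem to the multivalued logarithmic differential $\eta=\log(f)\,dg/g$ on $S$: one verifies that the residue of $\eta$ at $p$ is $(\ord_p g)\log f(p)$ up to a term symmetric in $f,g$, checks that the periods of $\eta$ along a symplectic basis of $H_1(S,\Z)$ lie in $2\pi i\,\Z$, and exponentiates; this second route is closer in spirit to the combinatorial tropical proof announced by the title.
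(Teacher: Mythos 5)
Your main route is genuinely different from the paper's. The paper proves the theorem by cutting $S$ into pairs of pants and then into cylinders, computing the ``Weil product'' on each cylinder via the residue theorem and Stokes' formula applied to $\frac{df}{f}\wedge\frac{dg}{g}$ (which vanishes identically on a complex curve), and observing that the resulting boundary corrections $\exp\bigl(\frac{1}{2\pi i}\int_{S_i}\log f\,\frac{dg}{g}\bigr)$ cancel telescopically when the pieces are reglued --- this localization is precisely how the paper sidesteps the multivaluedness of $\log f$ and any discussion of periods. Your proposal instead follows the classical algebraic proof (Weil's own, cf.\ Serre's \emph{Algebraic groups and class fields}, Ch.~III): verify the statement on $\mathbb{P}^1$ by an explicit Vieta-type computation and transport the general case down along $g\colon S\to\mathbb{P}^1$ via the norm. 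That route buys algebraic generality (it works over any algebraically closed field, where the analytic and surface-cutting arguments do not), while the paper's route buys a proof whose combinatorial skeleton literally mirrors the tropical argument (cut into edges/cylinders, prove locally, cancel at the gluing). Your ``alternative, possibly shorter route'' via the residue theorem for $\log(f)\,dg/g$ is in fact the one closest to the paper, except that the paper never needs to check that periods lie in $2\pi i\,\Z$, because cutting into cylinders makes all period contributions cancel in pairs.

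As written, however, your argument has a genuine gap: the projection formula $\prod_{p\in S}\{f,g\}_p=\prod_{q\in\mathbb{P}^1}\{N_\pi(f),t\}_q$ is where essentially all of the content of the theorem resides, and you state it as ``the main obstacle'' without proving it. The unramified case and the $\mathbb{P}^1$ case are routine; the whole difficulty is concentrated at the fibres over $0$ and $\infty$, where one must establish $(N_\pi f)(q)=\prod_{p\in\pi^{-1}(q)}f(p)^{e_p}$ and $\sum_{p\in\pi^{-1}(q)}\ord_p f=\ord_q(N_\pi f)$ by a local computation in completed local rings or Puiseux series. Until that lemma is proved, the argument is a reduction, not a proof. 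Two smaller inaccuracies: the branch points of $\pi=g$ are the critical values of $g$, not ``exactly the zeros and poles of $g$'' (a simple zero of $g$ is unramified, and $g$ can ramify over a finite nonzero value); and the formula $e_p=\ord_p g/\ord_q t$ is meaningless for $q\notin\{0,\infty\}$ since $\ord_q t=0$ there. Neither of these breaks the strategy, but they should be corrected, and the correct bookkeeping is $e_p=\ord_p g$ over $q=0$ and $e_p=-\ord_p g$ over $q=\infty$.
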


  The products in this theorem are well-defined since a meromorphic function can have only a finite number of zeros and poles on a compact Riemann surface. Weil's reciprocity law may be thought of a generalization of Vieta's theorem because in the case $S=\mathbb C P^1, f(z)=z, g(z) = \sum_{k=0}^n a_kz^k$ the product $\prod_{p \in \mathbb C} f(p)^{\ord_{p}{g}}$ is equal to $a_0/a_n$.
 
   It is not difficult to extend this theorem for the case of any two non-zero meromorphic functions on $S$, even for $f$ and $g$ having common zeroes or poles. For two meromorphic functions $f, g$ on a compact Riemann surface $S$ we can define \emph{Weil's symbol} $[f, g]_{p}$ as
  \[
    [f, g]_{p} = (-1)^{nm} \cdot \frac{a_n^{m}}{b_{m}^n}, 
  \]
  where $f(z) = a_nz^n  + \ldots, \ g(z) = b_mz^m + \ldots, a_n\ne 0, b_m\ne 0$ are expansions of $f$ and $g$ in a small neighbourhood of the point $p \in S$. Again, $[f, g]_{p}$ does not depend on a choice of a local parameter $z$.  Then, Weil's reciprocity law can be rewritten as a product formula
  \[
    \prod_{p \in S} [f, g]_{p} = 1. 
  \]

 {\bf History.} Weil's proof of this reciprocity law is reminiscent of the famous Menelaus' and Carnot's theorems. Menelaus' (70--140 CE) theorem states that given a triangle $ABC$ and a line $l$ crossing lines $AB,BC,AC$ in points $C',A',B'$ one has $$\frac{|AC'|}{|C'B|}\cdot\frac{|BA'|}{|A'C|}\cdot\frac{|CB'|}{|B'A|}=1.$$

Lazare Carnot's (1753--1823) theorem states that given a triangle $ABC$ and a conic crossing $AB,BC,AC$ in points $C_1',C_2',A_1',A_2',B_1',B_2'$ one has

$$\frac{|AC_1'|\cdot |AC_2'|}{|C_1'B|\cdot |C_2'B|}\cdot\frac{|BA_1'|\cdot|BA_2'|}{|A_1'C|\cdot |A_2
C|}\cdot\frac{|CB_1'|\cdot |CB_2'|}{|B_1'A|\cdot|B_2'A|}=1.$$

A similar statement can be made for any degree $d$ curve crossing each of the lines $AB, BC, AC$ in exactly $d$ points. The proof amounts to parameterizing each of the lines $AB, BC, AC$ in such a way that $|AC_1'|\cdot |AC_2'|\cdot\dots |AC_d'|$ becomes the product of the roots of a polynomial in one variable (the restriction of the equation of the curve on the line $AB$) and thus Vieta's theorem can be used several times, then a simple comparison of results concludes the proof. See more details about Menelaus' and Carnot's theorems in \cite{murchadha2012menelaus}.

Weil's proof of its reciprocity law follows the same line of arguments, for he considered a curve $C$ in $\mathbb C P^1\times \mathbb C P^1$ given by the image of $S\ni z \mapsto (f(z),g(z))$ and intersected $C$ with four coordinate lines of $\mathbb C P^1\times \mathbb C P^1$. Interestingly, one can state the converse of this reciprocity, which gives necessary and sufficient conditions for the existence of an algebraic curve in a toric surface with given intersection with boundary divisors \cite{weil_kh}.

    This reciprocity law turned out to be fundamental and has many generalizations in number theory; see \cite{serre2012algebraic}, Chapter 3, and further development in \cite{coleman1989reciprocity}. It may be proven  analytically, e.g., \cite[pp.  242–243]{griffiths2014principles}, \cite{previato1991another}, or topologically, e.g., \cite{weil1}.  Belinson mentioned that a generalization of this reciprocity law was the initial point of his famous work \cite{beilinson1985higher} on Belinson--Deligne cohomology theory; see also Deligne's work \cite{deligne1991symbole}.

    Weil's pairing is used in cryptography; see \cite{miller2004weil}. There are generalizations of this pairing for graphs and tropical curves, under the name of the monodromy pairing or Grotendieck's pairing, see \cite{lorenzini2000arithmetical}, \cite{MR1478029},  \cite{shokrieh2010monodromy}, \cite{bosch2002grothendieck}.

  \section{Tropical Weil's reciprocity law}
  
   Tropical curves are topological graphs with a metric on the edges.  Tropical curves were defined by Mikhalkin in \cite{mikh1} and appeared there as degenerations of algebraic curves, similar in spirit to Viro's patchworking technique \cite{viro2006patchworking}. In that case, tropical curves may have edges of infinite length, certain enhancements such as genus function, they may be immersed into plane or space, etc.; see \cite{mikh2, BIMS} for thorough expositions. Similarities between metric graphs and algebraic curves were also observed in other contexts; see \cite{MR1019714, MR1478029}. For the purpose of this article, we assume the following simplified definition.
    \begin{definition}\label{abstract_tropical_curve}
    A \emph{tropical curve} $\Gamma$ is a connected metric graph with a finite number of vertices and edges.
  \end{definition}

  \begin{definition}
      A \emph{tropical meromorphic function on a tropical curve $\Gamma$} is a piece-wise linear (with integer slope) continuous function on $\Gamma$.
  \end{definition}

\begin{definition}\label{rem_order}

The \emph{order} $\ord_{p}{f}$ of a tropical meromorphic function $f$ at a point $p$ on a tropical curve $\Gamma$ is the sum of outgoing slopes of $f$ around $p$. If $p$ is a one-valent vertex, then there is only one direction from $p$, and the (integer) slope of $f$ in this direction is the order of $f$ at $p$.  If $p$ belongs to the interior of an edge, then we take the sum of two slopes of $f$ (there are two directions emanating from $p$).  If  $p$ is a vertex of $\Gamma$ and $e_{1}, \ldots, e_{k}$ be the edges of $\Gamma$ adjacent to $p$ then 
     \begin{equation}
      \ord_{p}{f} = \sum_{j = 1}^{k} \ord_{p}\left(f\vert_{e_j}\right).
     \end{equation}
\end{definition}

We say that the tropical meromorphic function $f$ has a \emph{pole of multiplicity $m$} $x \in \Gamma$ if $\ord_{x}{f} = -m$ and \emph{zero of multiplicity $k$ in $x \in \Gamma$} if $\ord_{x}{f} = k$.  The divisor $(f)$ of a tropical meromorphic function is the formal sum $\sum_{p\in\Gamma} p\cdot \ord_pf$. 

  \begin{example}
    Consider a tropical meromorphic function $f(x) = \max(x, 3) + \max(x, 2) - \max(x, 1)$ on $[0,7]$. We have $\ord_{2}{f} = \ord_{3}{f} = 1, \ \ord_{1}{f} = \ord_{7}{f} = -1$ and $\ord_{x}{f} = 0$ for all other $x \in [0, 7]$.
    \begin{center}
      \begin{tikzpicture}
      \begin{axis}[
          axis lines = center,
          xlabel = \(x\),
          ylabel = {\(f(x)\)},
          xmin=0, xmax=7,
          ymin=0, ymax=7,
          xtick={0, 1, 2, 3,7},
          ytick={0, 3, 4,7},
      ]
      \addplot [
          domain=0:10, 
          samples=1000, 
          color=red,
      ]
      {max(x, 3) + max(x, 2) - max(x, 1)};
      
    \end{axis}
    \end{tikzpicture}
  \end{center}
    
  \end{example}

  In tropical geometry, multiplication is replaced with addition, so a tropical analog of Theorem~\ref{complex_weil_reciprocity} is the following theorem.

  \begin{theorem}[Tropical Weil's reciprocity law]\label{Tropical_Weil_Reciprocity}
    Let $\Gamma$ be a tropical curve and $f, g$ be two tropical meromorphic functions on $\Gamma$. Then 
      
    \begin{equation}
      \sum_{x \in \Gamma} f(x) \cdot \ord_{x}{g}  = \sum_{x \in \Gamma} g(x) \cdot \ord_{x}{f} .
    \end{equation}
  \end{theorem}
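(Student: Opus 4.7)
The plan is to view the identity as a discrete integration by parts, realising each side as a single symmetric sum over the edges of $\Gamma$.

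First, I would refine $\Gamma$ by adjoining to its vertex set all the breakpoints of $f$ and of $g$. This is a finite operation that affects neither the values $f(x)$, $g(x)$ nor the orders $\ord_x f$, $\ord_x g$: at a new vertex lying in the interior of an old edge, Definition~\ref{rem_order} reduces to the sum of the two adjacent outgoing slopes, which is exactly the original order at that point. After refinement, $f$ and $g$ are affine on every edge, so $\ord_x f = \ord_x g = 0$ at every non-vertex point and both sums in the theorem are finite.

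Next, write $V$ and $E$ for the vertices and edges of the refined graph, and for each edge $e \in E$ with endpoints $p_e, q_e$ denote by $L_e$ its length and by $s_f^e$, $s_g^e$ the slopes of $f|_e$, $g|_e$ in the chosen direction $p_e \to q_e$. Using $\ord_v g = \sum_{e \ni v} s^e_{g,v}$, where $s^e_{g,v}$ is the slope of $g|_e$ leaving $v$, and interchanging the order of summation:
\begin{equation*}
\sum_{v \in V} f(v)\, \ord_v g \;=\; \sum_{e \in E} \bigl( f(p_e)\, s^e_{g,p_e} + f(q_e)\, s^e_{g,q_e} \bigr) \;=\; \sum_{e \in E} \bigl( f(p_e) - f(q_e) \bigr)\, s_g^e,
\end{equation*}
since leaving $p_e$ along $e$ has slope $+s_g^e$ while leaving $q_e$ has slope $-s_g^e$. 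Because $f|_e$ is affine with slope $s_f^e$ from $p_e$ to $q_e$, we have $f(q_e) - f(p_e) = s_f^e L_e$, and therefore
\begin{equation*}
\sum_{v \in V} f(v)\, \ord_v g \;=\; -\sum_{e \in E} s_f^e\, s_g^e\, L_e.
\end{equation*}

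The right-hand side is manifestly symmetric in $f$ and $g$, so the same computation with the roles of $f$ and $g$ exchanged yields $\sum_{v \in V} g(v)\, \ord_v f = -\sum_{e \in E} s_f^e s_g^e L_e$, completing the proof. There is no real analytic obstacle here: the argument is a purely combinatorial integration by parts, and the only delicate step is keeping signs straight at each endpoint of each edge when converting the vertex sum into an edge sum. Note also that, in contrast with the complex case, the tropical identity needs no hypothesis on common zeros or poles of $f$ and $g$, because in the additive setting a term $0 \cdot \ord_x g$ is unambiguously $0$.
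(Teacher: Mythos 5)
Your proof is correct and follows essentially the same route as the paper's: a discrete integration by parts that rewrites both sides as the symmetric edge sum $-\sum_{e} s_f^e s_g^e L_e$ (the paper performs the Abel summation on each segment separately and then glues the edges using Definition~\ref{rem_order}, whereas you refine the graph first and exchange the order of summation globally). The only cosmetic difference is that you exhibit the common symmetric value explicitly, which the paper leaves implicit in its comparison of the two sums.
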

  \begin{proof}
    First, we prove the tropical reciprocity law for $\Gamma = e$ where $e$ is a metric segment $[0, L]$. Denote by $$\{ x_1, \ldots, x_n \}, \ x_1 \le x_2 \le \ldots, \le x_n$$ the set of poles and zeroes of $f$ and $g$ (i.e. the union of supports of their divisors). 

    Let $k_i$  be a slope of $f$ on a segment $[x_i, x_{i + 1}]$, $\ell_i$ be a slope of $g$ on a segment $[x_i, x_{i + 1}]$. Then the formula $\sum_{x \in e} f(x) \cdot \ord_{x}{g}  = \sum_{x \in e} g(x) \cdot \ord_{x}{f}$ is equivalent to 
    \begin{equation}
      \ell_1 f(x_1) + \sum_{i = 2}^{n - 1}(\ell_{i} - \ell_{i - 1})f(x_i)  - \ell_{n - 1} f(x_n) = k_1 g(x_1) + \sum_{i = 2}^{n} (k_{i} - k_{i - 1}) g(x_i) - k_{n - 1} g(x_n). \label{eq_1}
    \end{equation}
    
    The left side and the right side, correspondingly, are $$\sum_{i = 1}^{n - 1}\ell_{i} (f(x_{i}) - f(x_{i + 1})) \text{ \  and \ } \sum_{i = 1}^{n - 1} k_{i}(g(x_i) - g(x_{i + 1})).$$

     As $f$ and $g$ are piece-wise linear, we have $k_i = \frac{f(x_{i + 1}) - f(x_i)}{x_{i + 1} - x_i}, \ \ell_i = \frac{g(x_{i + 1}) - g(x_i)}{x_{i + 1} - x_{i}}$, so  
     \begin{equation}
      \sum_{i = 1}^{n - 1}\ell_{i} (f(x_{i}) - f(x_{i + 1})) = \sum_{i = 1}^{n - 1} \frac{g(x_{i + 1}) - g(x_{i})}{x_{i + 1} - x_{i}} (f(x_{i}) - f(x_{i + 1}))
     \end{equation}
     and  
     \begin{equation}
      \sum_{i = 1}^{n - 1} k_{i}(g(x_i) - g(x_{i + 1})) = \sum_{i = 1}^{n - 1} (g(x_i) - g(x_{i + 1})) \frac{f(x_{i + 1}) - f(x_i)}{x_{i + 1} - x_{i}}, 
     \end{equation}

     from what we obtain that the equality~\eqref{eq_1} holds.

     Let $\Gamma$ be an arbitrary tropical curve. Denote the set of edges of $\Gamma$ by $E(\Gamma) = \{ e_1, \ldots, e_m \}$.


        Using the formula in Definition~\ref{rem_order} we obtain
     
     \begin{equation}
      \sum_{x \in \Gamma} g(x) \cdot \ord_{x}{f} = \sum_{j = 1}^{m} \sum_{x \in e_{j}} g(x) \cdot \ord(f\vert_{e_j}) = \sum_{j = 1}^{m}\sum_{x \in e_{j}}  f(x) \cdot \ord_{x}(g\vert_{e_{j}}) = \sum_{x \in \Gamma} f(x) \cdot \ord_{x}{g},
     \end{equation}

     as for every edge $e_j$ we have $\sum_{x \in e_{j}} g(x) \cdot \ord(f\vert_{e_j}) = \sum_{x \in e_{j}}  f(x) \cdot \ord_{x}(g\vert_{e_{j}})$. 

  \end{proof}

  \begin{definition}\label{Weil_symbol}
    Let $f$ and $g$ be two tropical meromorphic functions on a tropical curve $\Gamma$. \emph{Tropical Weil's symbol } of $f, g$ at the point $p \in \Gamma$ is 
    \[
      [f, g]_{p} \eqdef \ord_{p}{g} \cdot f(p)  - \ord_{p}{f} \cdot g(p). 
    \]
  \end{definition}

  As in the complex algebraic geometry case, we can rewrite Theorem~\ref{Tropical_Weil_Reciprocity} like a product formula. 
    
  \begin{theorem}[Tropical Weil's reciprocity law]
    Let $\Gamma$ be a compact tropical curve and $f, g$ be two meromorphic functions on $\Gamma$. Then we have 
    \begin{equation}
      \sum_{x \in \Gamma}[f, g]_{x} = 0.
    \end{equation}
  \end{theorem}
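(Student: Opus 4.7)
The plan is to observe that this product-formula version is a direct algebraic rearrangement of the previous Theorem~\ref{Tropical_Weil_Reciprocity}, not an independent statement. First I would remark that the sum $\sum_{x\in\Gamma}[f,g]_x$ is finite: both $\ord_x f$ and $\ord_x g$ vanish outside the (finite) supports of the divisors $(f)$ and $(g)$, so only finitely many terms in the sum are nonzero, and there is no issue of convergence or well-definedness.

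Next, I would simply substitute the definition of tropical Weil's symbol from Definition~\ref{Weil_symbol} and split the resulting expression into two sums:
\[
  \sum_{x \in \Gamma}[f,g]_x \;=\; \sum_{x\in\Gamma}\bigl(\ord_x g \cdot f(x) - \ord_x f \cdot g(x)\bigr) \;=\; \sum_{x\in\Gamma} f(x)\cdot \ord_x g \;-\; \sum_{x\in\Gamma} g(x)\cdot \ord_x f.
\]
By Theorem~\ref{Tropical_Weil_Reciprocity} the two sums on the right are equal, so their difference vanishes, which is exactly the claimed identity.

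There is essentially no obstacle: once Theorem~\ref{Tropical_Weil_Reciprocity} is in hand, the product (sum) formula is a one-line reformulation, exactly parallel to how the classical Weil reciprocity $\prod_p [f,g]_p = 1$ is a rewriting of $\prod_p f(p)^{\ord_p g} = \prod_p g(p)^{\ord_p f}$ in the complex setting. The only thing worth explicitly noting for the reader is why compactness of $\Gamma$ is invoked in the hypothesis: it is used to ensure that $\Gamma$ has a finite graph structure so that the argument of Theorem~\ref{Tropical_Weil_Reciprocity} (which proceeds edge-by-edge over finitely many edges) applies and the divisors of $f$ and $g$ are finite.
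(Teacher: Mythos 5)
Your proposal is correct and matches the paper's intent exactly: the paper presents this theorem as a direct rewriting of Theorem~\ref{Tropical_Weil_Reciprocity} via Definition~\ref{Weil_symbol} and does not even write out the one-line substitution you supply. No issues.
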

    
  Classical Weil's symbol depends on functions $f$ and $g$ skew-symmetrically and is multiplicative in each argument (see for example~\cite[p. 86]{weil1}). Tropical Weil's symbol has similar properties. One can prove the following theorem by direct computation.

  \begin{proposition} 
    Tropical Weils symbol satisfies the following properties:
    \begin{enumerate}
      \item The Tropical Weil symbol depends on functions f, g skew-symmetrically (in tropical sense): $[f, g]_{x} = - [g, f]_{x}$.

      \item The Tropical Weil symbol is additive  in each argument, i.e., for any triplet of non-zero tropical meromorphic functions $f, g, \varphi$ and a point $x \in \Gamma$: 
      \begin{equation}
        [f + \varphi, g]_{x} = [f, g]_{x} + [\varphi, g]_{x}, \quad [f, \varphi + g]_{x} = [f, \varphi]_{x} + [f, g]_{x}. 
      \end{equation}
    \end{enumerate}
  \end{proposition}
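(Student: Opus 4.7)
The plan is to verify both properties by direct unwinding of Definition~\ref{Weil_symbol}, with the only nontrivial input being the fact that $\ord_p$ is $\Z$-linear on the space of tropical meromorphic functions at $p$.

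For skew-symmetry, I would simply swap the roles of $f$ and $g$ in the defining expression $[f,g]_p = \ord_p g \cdot f(p) - \ord_p f \cdot g(p)$, observe that the two terms change sign, and conclude $[g,f]_p = -[f,g]_p$. No auxiliary lemma is needed.

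For additivity in the first argument, the first step is to establish that $\ord_p(f + \varphi) = \ord_p f + \ord_p\varphi$. This follows immediately from Definition~\ref{rem_order}: on each edge $e_j$ incident to $p$, the outgoing slope of the sum is the sum of the outgoing slopes (piecewise-linear functions add slope by slope), so summing over the edges at $p$ gives the additivity of the order. With this in hand, I would expand
\[
[f+\varphi, g]_p = \ord_p g \cdot (f+\varphi)(p) - \ord_p(f+\varphi) \cdot g(p),
\]
distribute using pointwise additivity of function values together with additivity of $\ord_p$, and regroup the four resulting terms as $[f,g]_p + [\varphi,g]_p$. Additivity in the second argument is identical, or alternatively can be deduced from the first case by invoking skew-symmetry just proved.

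No step presents a genuine obstacle; the only conceptual point to make explicit is the linearity of $\ord_p$, which is why the authors say the proof is by \emph{direct computation}. I would therefore keep the write-up to one short paragraph per property, emphasizing the additivity of $\ord_p$ once and then performing the algebra.
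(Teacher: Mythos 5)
Your proposal is correct and matches the paper's intended argument: the paper offers no written proof beyond the remark that the proposition follows ``by direct computation,'' and your computation (skew-symmetry read off from the definition, additivity reduced to the additivity of $\ord_p$, which holds because outgoing slopes add edge by edge) is exactly that computation, with the one genuinely necessary observation --- linearity of $\ord_p$ --- made explicit.
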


\begin{remark}
It is not difficult to extend the definition of the tropical Weil symbol to the case of tropical curves with edges of infinite lengths. 
\end{remark}

One can prove the tropical Weil reciprocity law with tropical modifications and tropical momentum map; see \cite{mythesis}, Chapter 3.  The tropical momentum map is a tropical version of Menelaus' and Carnot's theorems; see \cite{Mikhalkin:2015kq}. This way of proving the tropical Weil reciprocity law boils down to mapping a tropical curve to a planar tropical plane (using two meromorphic functions on it) and then expressing $\sum_{x \in \Gamma} f(x) \cdot \ord_{x}{g} $ via the coefficients of the equation of the planar tropical curve. This approach is a tropical version of Khovanskii's approach \cite{weil_kh} to the intersection of planar curves with divisors at infinity. It is an interesting problem how to detropicalize (i.e. find a complex version) of a generalized tropical momentum map (as it is defined in \cite{mythesis} Chapter 3, p.80) to get multidimensional version of Parshin symbols, as in \cite{Khovanskii1}, but for higher codimension.

Note that we did not use in the proof that the slopes of our functions are integers. Consider continuous piece-wise linear functions on a tropical curve $\Gamma$ with no restrictions on slopes. Call such functions {\it tropical quasi-meromorphic functions}. The notion of the divisor of such a function is defined as for the tropical meromorphic function. Thus, we obtain the following theorem.

\begin{theorem}[Tropical Weil's reciprocity law for tropical harmonic functions]\label{Tropical_Weil_Reciprocity_harmonic}
    Let $\Gamma$ be a tropical curve and $f, g$ be two tropical quasi-meromorphic functions on $\Gamma$. Then 
      
    \begin{equation}
      \sum_{x \in \Gamma} f(x) \cdot \ord_{x}{g}  = \sum_{x \in \Gamma} g(x) \cdot \ord_{x}{f} .
    \end{equation}
  \end{theorem}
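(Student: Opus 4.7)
The plan is to observe that the proof of Theorem~\ref{Tropical_Weil_Reciprocity} goes through \emph{verbatim} for quasi-meromorphic functions, since nowhere in that argument did we invoke the integrality of slopes. Indeed, the quantities that actually appear are $f(p)$, $g(p)$, and one-sided slopes of $f$ and $g$ at $p$; both the definition of $\ord_p f$ (as a sum of outgoing slopes) and the definition of the divisor extend unchanged to arbitrary real slopes, so both sides of the claimed identity are well-defined finite sums.

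Concretely, I would first reduce to the case $\Gamma = [0,L]$, exactly as before. Choose $x_1 \le \dots \le x_n$ containing $\supp(f) \cup \supp(g)$ together with the endpoints $0, L$, let $k_i$ and $\ell_i$ be the (now real) slopes of $f$ and $g$ on $[x_i, x_{i+1}]$, and perform the Abel-type summation by parts that was used in the proof of Theorem~\ref{Tropical_Weil_Reciprocity}. This reduces both $\sum_{x \in e} f(x)\,\ord_x g$ and $\sum_{x \in e} g(x)\,\ord_x f$ to
\[
  \sum_{i=1}^{n-1} \ell_i \bigl(f(x_i) - f(x_{i+1})\bigr) \quad \text{and} \quad \sum_{i=1}^{n-1} k_i \bigl(g(x_i) - g(x_{i+1})\bigr),
\]
and the piecewise linearity identities $k_i = (f(x_{i+1}) - f(x_i))/(x_{i+1} - x_i)$, $\ell_i = (g(x_{i+1}) - g(x_i))/(x_{i+1} - x_i)$ yield equality of the two expressions. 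This algebraic manipulation is purely arithmetic in $\R$, so whether $k_i, \ell_i \in \Z$ or $k_i, \ell_i \in \R$ is immaterial.

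Finally, I would pass from a single segment to an arbitrary tropical curve $\Gamma$ by the same edge-by-edge decomposition: splitting every edge at all points of $\supp(f) \cup \supp(g)$ produces segments on which the above identity applies, and summing the resulting equalities yields
\[
  \sum_{x \in \Gamma} f(x)\,\ord_x g \;=\; \sum_{j=1}^{m} \sum_{x \in e_j} f(x)\,\ord_x(g|_{e_j}) \;=\; \sum_{j=1}^{m} \sum_{x \in e_j} g(x)\,\ord_x(f|_{e_j}) \;=\; \sum_{x \in \Gamma} g(x)\,\ord_x f,
\]
using Definition~\ref{rem_order} to repackage the per-edge sums as global orders at vertices. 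There is no real obstacle here; the only thing worth double-checking is that the definitions of $\ord_p$ and of the divisor are literally the same in the quasi-meromorphic setting, which they are. Hence the result follows immediately from the proof of Theorem~\ref{Tropical_Weil_Reciprocity}.
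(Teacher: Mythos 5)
Your proposal is correct and matches the paper exactly: the paper derives this theorem by the same observation that the proof of Theorem~\ref{Tropical_Weil_Reciprocity} never uses integrality of the slopes, so it applies verbatim to quasi-meromorphic functions.
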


   \subsection{Combinatorial proof of Weil's reciprocity law}\label{alt_proof_of_weil_reciprocity}

    In this section we give a proof of the classical Weil's reciprocity law (Theorem~\ref{complex_weil_reciprocity}), based on tropical ideas. One can briefly summarize the idea of the proof of Theorem~\ref{Tropical_Weil_Reciprocity}  as follows. Cut the tropical curve into edges, prove the statement for the restriction of the functions on each edge, and check that the condition of the theorem preserves gluing, which is equivalent to the Definition~\ref{rem_order}.

    A tropical curve can be seen as a limit of a family of complex algebraic curves. We can imagine that during this degeneration, the cylinders of the Riemann surface $S$ shrink to the edges of the tropical curve $\Gamma$ (Figure~\ref{fig:1}).  

    \begin{figure}[htbp]
         \begin{tikzpicture}[scale = 1]
          \node at (0,0){\includegraphics[scale = 1]{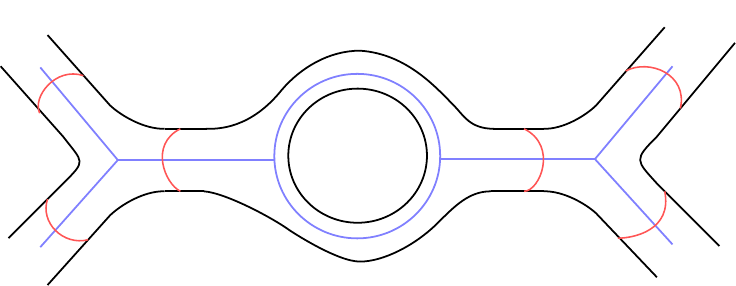} };
          \node[right] at (1.2, 0){\small \( \Gamma \)};
        \end{tikzpicture}
        \centering
        \caption{A Riemann surface $S$ shrinks to the tropical curve $\Gamma$.}
        \label{fig:1}
        \centering
        \end{figure}

    To simplify the notation in further computations, assume the following notation. 

    \begin{definition} 
        On a compact surface $S$ (possibly with a non-empty boundary) for two nonzero meromorphic functions $f, g$ with disjoint divisors, define \emph{Weil's product} as
        \[
            \WP(f, g, S) \eqdef \frac{\displaystyle \prod_{z \in S} f(z)^{\ord_{z}{g}}}{\displaystyle \prod_{z \in S} g(z)^{\ord_{z}{f}}}. 
        \]
    \end{definition}  

    \begin{remark}
        Weil's reciprocity law is equivalent to $\WP(f, g, S) = 1$ for a compact Riemann surface $S$ without boundary.  
    \end{remark}  

    So, one can try to ``combinatorially'' prove Weil's reciprocity law by cutting the Riemann surface $S$ into cylinders, computing Weil's product separately for each cylinder, and then gluing everything back together. 

    In the following lemma, we show that Weil's product for a cylinder may not be equal to 1 (since it is not closed), but the correction looks quite nice. 

    \begin{lemma}\label{lemma_1} 
      Let $f, g$ be meromorphic functions on a cylinder $C = \{ z \in \mathbb{C}\ \vert \ R_{1} < z < R_{2} \}$ with  disjoint divisors $(f)$ and $(g)$ with supports $\supp(f) = \{ a_1, \ldots, a_n \}$ and $\supp(g) = \{ b_1, \ldots, b_m \}$. Let $S_2$ and $S_1$ be the cylinder's external and internal boundary circles $C$. Then 
      \begin{equation}
        \WP(f, g,C) = \frac{\varphi(S_2)}{\varphi(S_1)},
      \end{equation}
      where 
      \begin{equation}
        \varphi(S_i) = \varphi(f, g, S_i) = \exp\left(\frac{1}{2\pi i} \int\limits_{S_i} \log{f} \frac{dg}{g}\right).
      \end{equation}

    \end{lemma}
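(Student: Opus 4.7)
Taking logarithms, the lemma is equivalent to the identity
\[
\frac{1}{2\pi i}\int_{\partial C}\log f\cdot\frac{dg}{g}\;\equiv\;\sum_{j}\ord_{b_j}(g)\log f(b_j)-\sum_{i}\ord_{a_i}(f)\log g(a_i)\pmod{2\pi i\,\mathbb{Z}},
\]
since the left-hand side is $\log(\varphi(S_2)/\varphi(S_1))$ and the right-hand side is $\log\WP(f,g,C)$. The natural approach is to apply the residue theorem to the $1$-form $\alpha:=\log f\cdot\tfrac{dg}{g}$, while circumventing the fact that $\alpha$ is neither single-valued nor meromorphic at the zeros and poles of $f$.

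To make $\log f$ single-valued, I would introduce a radial cut $\ell\subset C$ joining $S_1$ to $S_2$, together with an arc $\gamma_i$ from each $a_i$ out to a point on $S_2$, all chosen disjoint from $\supp(f)\cup\supp(g)$. After excising an open disk of radius $\epsilon$ about every $a_i$ and $b_j$, the resulting region $D_\epsilon\subset C$ is simply connected; on it a single-valued holomorphic branch of $\log f$ exists, so $\alpha$ is a closed holomorphic $1$-form and Stokes' theorem gives $\int_{\partial D_\epsilon}\alpha=0$.

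Next I would evaluate each piece of $\partial D_\epsilon$ in the limit $\epsilon\to 0$. The outer and inner boundary circles $S_2,-S_1$ together give $2\pi i\log(\varphi(S_2)/\varphi(S_1))$. Each small circle around a $b_j$ contributes the ordinary residue $2\pi i\,\ord_{b_j}(g)\log f(b_j)$, since $\log f$ is holomorphic there. Each small circle around an $a_i$ contributes $0$: writing $\log f=\ord_{a_i}(f)\log(z-a_i)+\text{holo.}$, the only nontrivial term is $\ord_{a_i}(f)\oint_{|z-a_i|=\epsilon}\log(z-a_i)\tfrac{g'(z)}{g(z)}dz$, which an integration by parts reduces to $\ord_{a_i}(f)\bigl(2\pi i\log g(a_i)-\oint\tfrac{\log g(z)}{z-a_i}dz\bigr)=0$ by Cauchy's formula. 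Across each arc $\gamma_i$ the monodromy of $\log f$ jumps by $2\pi i\,\ord_{a_i}(f)$, producing a bank contribution of $-2\pi i\,\ord_{a_i}(f)\bigl(\log g(\text{endpoint on }S_2)-\log g(a_i)\bigr)$, while across $\ell$ the jump $2\pi i N_f$ (with $N_f=\tfrac{1}{2\pi i}\int_{S_1}\tfrac{df}{f}$ the winding number of $f$) yields an analogous endpoint term.

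Summing all contributions to zero and rearranging, the $\log g$ terms at the cut endpoints on $S_1$ and $S_2$ merge with the ``slit'' circles $S_1,S_2$ under a consistent choice of branch; what survives is precisely $2\pi i\bigl[\log(\varphi(S_2)/\varphi(S_1))-\sum_j\ord_{b_j}(g)\log f(b_j)+\sum_i\ord_{a_i}(f)\log g(a_i)\bigr]=0$, and exponentiation gives the lemma. The main obstacle is this final bookkeeping: verifying that the endpoint and winding corrections coming from the cuts cancel modulo $2\pi i\,\mathbb{Z}$ against the branch ambiguities implicit in $\varphi(S_i)$, so that no spurious factor survives. This step is the analytic shadow of the ``gluing'' step in the tropical proof of Theorem~\ref{Tropical_Weil_Reciprocity}, where the additivity of orders across adjacent edges produces the cancellation for free.
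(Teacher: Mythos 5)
Your plan is essentially the paper's own proof: both apply Stokes'/the residue theorem to the form $\log f\,\frac{dg}{g}$ on the cylinder with small disks around $\supp(f)\cup\supp(g)$ removed (using that $\frac{df}{f}\wedge\frac{dg}{g}=0$ there), with the outer and inner boundary circles producing $\varphi(S_2)/\varphi(S_1)$ and the excised disks producing $\WP(f,g,C)$. Your explicit branch cuts and the integration by parts around the $a_i$ merely make rigorous the bookkeeping that the paper compresses into the quantities $A$ and $B$ and handles by exponentiating, so the two arguments coincide.
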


    \begin{proof}
      First note that 
        \begin{equation}
            \WP(f, g,C)= \exp\left( \sum_{j = 1}^{m} \log{f(b_j)} \cdot \ord_{b_j}{g} - \sum_{i = 1}^{n} \log{g(a_i)} \cdot \ord_{a_i}{f} \right),
        \end{equation}
        so we need to compute the right-hand side. Using the residue theorem, we obtain that  
        \begin{equation}
            \int\limits_{ \partial \left( \bigcup B_{\varepsilon}(b_j) \right) } \log{f} \frac{dg}{g} = 2\pi i \sum_{j = 1}^{m} \log{f(b_j)} \cdot \ord_{b_j}{g} = B,
              \int\limits_{ \partial \left( \bigcup B_{\varepsilon}(a_i) \right) } \log{g} \frac{df}{f} = 2\pi i \sum_{i = 1}^{n} \log{g(a_i)} \cdot \ord_{a_i}{g} = A \label{eq_13}
        \end{equation}
        
         Consider the manifold $C_\varepsilon = C - \bigcup B_{\varepsilon}(a_i) - \bigcup B_{\varepsilon}(b_j)$. As the form $\frac{df}{f} \wedge \frac{dg}{g}$ is non-singular on $C_\varepsilon$, we have $\int_{C_\varepsilon} \frac{df}{f} \wedge \frac{dg}{g} = 0$. So, from the Stokes formula, we get 
         \begin{equation}
             1 = \exp\left( \frac{1}{2\pi i} \int\limits_{C_{\varepsilon}} \frac{df}{f} \wedge \frac{dg}{g} \right) = \frac{\displaystyle   \exp\left( \frac{1}{2\pi i}  \int\limits_{\partial C} \log{f} \frac{dg}{g} \right) }{\displaystyle  \exp\left( \frac{1}{2\pi i} \cdot (B - A) \right) } \label{eq_14}
         \end{equation}
         
     \end{proof}

         Since $\log$ is multivalued, we would instead write the exponents of integrals, and thus, the numbers above are well-defined.  From the Lemma~\ref{lemma_1}, we see that Weil's product over a cylinder may not be equal to 1, but the correction that comes out depends only on the integral of some meromorphic form over the boundary circles. 

    We are ready to provide an alternative proof of Weil's reciprocity.

    \begin{proof}[Alternative proof of the theorem~\ref{complex_weil_reciprocity}]
    Let us cut the Riemann surface $S$ into pair-of-pants and then cut every pair-of-pants into three cylinders (see Figure~\ref{fig:7}). Since the surface is closed, when we glue all the pieces, factors coming from the boundary circles telescopically cancel, and we obtain that Weil's product over $S$ equals $1$.  For the case of a sphere (i.e., a surface of genus $g = 0$), we need to cut it into two disks and a cylinder, and the same cancelation happens.

      \begin{figure}[htbp]
        \centering
         \begin{tikzpicture}[scale = 0.51]
          \node at (0,0){\includegraphics[scale = 0.75]{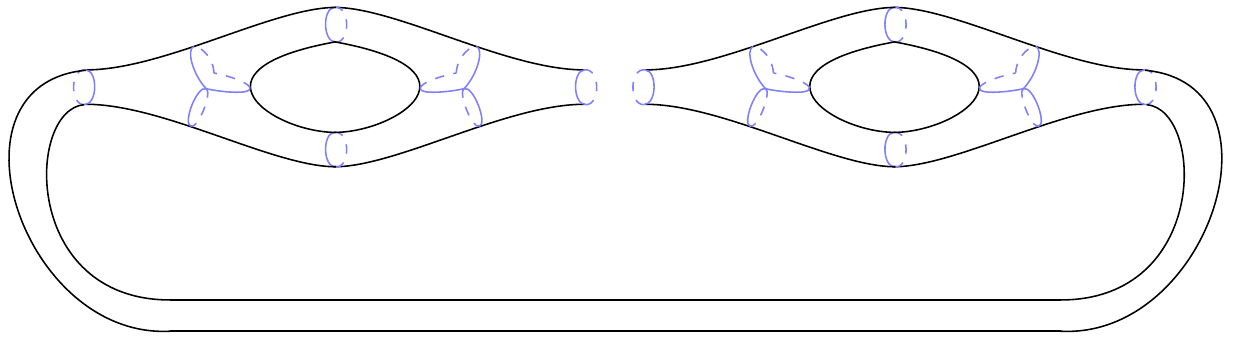} };
           \node[right] at (-0.58, 2.1){\tiny \( \dots \)};
        \end{tikzpicture}
        \caption{Cutting the Riemann surface $S$ into the cylinders.}
        \label{fig:7}
        \end{figure}
    \end{proof}
    
{\bf Beilinson functionals.} Let $\alpha$ be a point and $\gamma$ be loop in a Riemann surface $C$, which passes through $\alpha$. Define Beilinson functionals (\cite{weil1} and~\cite{beilinson1980higher})
$$c(f,g)(\gamma) = \left(\frac{1}{2\pi i}\right)^2\left(\int_\gamma \log f d \log g - \log g (\alpha)\int_\gamma d \log f)\right)\in \mathbb{C}/\mathbb{Z},$$ 

$$[f,g,\gamma]= \exp[2\pi i \cdot  c(f,g)(\gamma)].$$

      Our integral $\varphi(S)$ from  Lemma~\ref{lemma_1}  is similar to Beilinson's integral. If $\gamma$ is a small loop around $p$ then $[f,g]_p = [f,g,\gamma]$.


\section{Tropical Weil pairing }

Let $f$ be a tropical meromorphic function on a tropical curve $\Gamma$. The formal sum $(f)=\sum_{p\in\Gamma} \ord_pf \cdot p$ is called the divisor of $f$. The sum of the coefficients of a divisor $D$ is called the degree of $D$. Note that the degree of $(f)$ is zero since $\sum_{p\in\Gamma} \ord_p{f} = 0$. However, given a divisor $D$ of degree zero, it is not always possible to find a tropical meromorphic function $f$ such that $D = (f)$, but if such a function exists, it is unique up to an additive constant.

Consider continuous piece-wise linear functions on $\Gamma$ with no restrictions on slopes. Such functions are called tropical quasi-meromorphic functions. The notion of the order $\ord_p f$ and the divisor  $\sum p\cdot \ord_p f $ of such a function $f$ is defined as for the tropical meromorphic function. But now, given any divisor (with real coefficients) $D$ of degree zero, one can always find a unique (up to an additive constant) tropical quasi-meromorphic function $f$ such that $D=(f)$.

\begin{theorem} 
  Let $D = \sum_{i} q_i \cdot p_i$  be a divisor (on a tropical curve $\Gamma$) of degree zero with real coefficients $q_i$. Then, there exists a unique (up to an additive constant) tropical quasi-meromorphic function $f$ such that $D = (f)$. 
\end{theorem}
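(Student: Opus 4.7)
The plan is two-fold: prove uniqueness via a maximum principle on $\Gamma$, and obtain existence by reducing to a weighted graph Laplacian on a suitable subdivision of $\Gamma$.

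For uniqueness, suppose $f_1, f_2$ are two tropical quasi-meromorphic functions with $(f_1) = (f_2) = D$ and set $h = f_1 - f_2$, so that $(h) = 0$. At any interior point of an edge, the two outgoing slopes of $h$ must sum to zero, which forces $h$ to have no slope-break there; hence $h$ is linear along each edge. Because $\Gamma$ is compact and $h$ is linear on every edge, $h$ attains its maximum at some vertex $v_0$. At $v_0$ every outgoing slope of $h$ is non-positive, but their sum is $\ord_{v_0} h = 0$, so each outgoing slope vanishes; by linearity $h$ is then constant on each edge adjacent to $v_0$. Propagating this along edges, connectedness of $\Gamma$ gives $h \equiv \mathrm{const}$, proving uniqueness up to an additive constant.

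For existence, subdivide $\Gamma$ so that the finite set $\supp D$ is contained in the vertex set $V$; let $E$ be the resulting edge set with lengths $\ell_e$. Any assignment $f \colon V \to \R$ extends uniquely to a continuous piece-wise linear function on $\Gamma$ by linear interpolation along each edge, and any such $f$ automatically satisfies $\ord_p f = 0$ at every interior point $p$. The conditions $\ord_v f = q_v$ at each vertex $v$, where $q_v$ is the coefficient of $v$ in $D$ (and $q_v = 0$ at auxiliary subdivision vertices), become the linear system
\[
(L f)(v) \eqdef \sum_{e = vw \in E} \frac{f(w) - f(v)}{\ell_e} = q_v, \qquad v \in V,
\]
where $L$ is the weighted graph Laplacian with weights $1/\ell_e$. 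The matrix $L$ is symmetric, and the identity $-\langle L f, f\rangle = \sum_{e = vw \in E} (f(v) - f(w))^2 / \ell_e$ shows that $-L$ is positive semidefinite with kernel equal to the line of constant vectors (using connectedness of $\Gamma$). By the Fredholm alternative, the image of $L$ coincides with $\{q \in \R^V : \sum_v q_v = 0\}$, and the hypothesis $\deg D = 0$ places our $q$ in this image; the resulting $f$ is unique up to an additive constant.

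The main technical point is verifying that this vertex-valued recipe really produces a tropical quasi-meromorphic function with divisor precisely $D$. This hinges on taking the subdivision fine enough that $\supp D \subseteq V$, so that no divisor mass is ``hidden'' in an edge interior, and on the observation that linear interpolation on each subdivided edge forces $\ord_p f = 0$ away from $V$. With these conventions in place, the tropical divisor of the constructed $f$ is exactly $D$; uniqueness up to an additive constant then follows either from $\ker L = \R \cdot \mathbf{1}$ or directly from the maximum-principle argument of the first half.
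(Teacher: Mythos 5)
Your proposal is correct and is essentially the paper's own argument in disguise: the paper realizes $f$ as the electric potential of a circuit with resistances equal to edge lengths and charges $q_i$ at the $p_i$, which is exactly your weighted graph Laplacian system with conductances $1/\ell_e$, and its uniqueness remark is your maximum-principle argument. Your version simply makes explicit (via positive semidefiniteness, $\ker L = \R\cdot\mathbf{1}$, and the Fredholm alternative) the solvability step that the paper delegates to Kirchhoff's laws.
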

\begin{proof} Uniqueness follows from the fact that a function $f$ with $\ord_p f=0$ for all $p$ must be equal everywhere to its maximal value.
  
    Let us consider $\Gamma$ as an electric circuit: at each point $p_i \in \supp{D}$ place an electric charge $q_i$ and let the resistance on a segment $[a, b] \subset \Gamma$ be equal to the length of the segment. The total charge equals the degree of $D$, i.e., zero. This electric circuit uniquely defines currents on all edges.

    Then, construct a continuous piece-wise function $f$ with a slope at $p \in \Gamma$ equal to the electric current at $p$.  The function $f$ represents the electric potential.  Kirchhoff's first law gives us that for every $p_i \in \supp{D}$ we have $\sum I_j = \ord_{p_i}{f}$ where $I_j$ are electric currents around $p$, so $\ord_{p_i}{f} = q_i$ and $(f) = D$.  
    
    In the hydraulic analogy (water in pipes, no gravitation, pipes correspond to tropical edges), the function $f$ represents the pressure, and the lengths of the edges of the tropical curve correspond to the areas of the sections of pipes (note that in the hydraulic analogy, the lengths of pipes are irrelevant). The flow rate (liters per second) corresponds to the current in this analogy. 
\end{proof}

\begin{definition} Let $\Gamma$ be a tropical curve and $D_1=\sum_{i=1}^n a_ip_i,D_2=\sum_{j = 1}^m b_jq_j$ be two divisors of degree zero on $\Gamma$. The tropical Weil pairing $TW(D_1, D_2)$ is defined as follows. Find tropical harmonic functions $f_1,f_2$ such that $(f_1)=D_1, (f_2)=D_2$. Then

$$TW(D_1,D_2) = \sum _{i=1}^n a_i f_2(p_i)= \sum_{j=1}^mb_jf_1(q_j).$$

\end{definition}

Note that the second equality in the definition follows from the tropical Weil reciprocity law, so the tropical Weil pairing is well-defined.

What could be the classical version of the tropical Weil pairing? Following G. Mikhalkin's suggestion, we can do the following:

Given a divisor $D=\sum_{i=1}^na_np_n$ of degree zero on a Riemann surface $S$, we can define a unique real-normalized meromorphic differential $\omega$  whose all periods are real and the residue at $p_i$ is equal to $a_i$ for $i=1,\dots,n$. Such differentials were introduced in \cite{grushevsky2009universal} and then used in \cite{krichever2012real}.
Choose any point $z_0\in S$. Then, the real part of the function $f(z)=\frac{1}{i}\int_{z_0}^z\omega$ does not depend on the path of the integration and so is well defined. 

The classical version of the tropical Weil's pairing between $D$ and a divisor $D'$ of degree zero will be the sum of the values of $\mathrm{Re}{f}$ over $D'$.  This also corresponds to the adelic pairing on differentials of the second kind as in \cite{takhtajan2013quantum}, p. 387.  

It would be interesting to study the tropical Weil pairing for divisors with restrictions on the slopes of the corresponding tropical quasi-meromorphic functions. Divisors of finite order correspond to rational slopes. However, we can consider slopes from a given (real) number field.


\end{document}